\newcommand{\comment}[1]{}
\newcommand{\R}{{\mathbb R}}
\def\H{{\mathcal H}}
\newcounter{rea}
\newcounter{rek}
\newcounter{res}
\begin{document}
\title[]{Norm of the Hausdorff operator on the real Hardy space $H^1(\R)$}         
\author{Ha Duy Hung}    
\address{High School for Gifted Students, Hanoi National University of Education, 136 Xuan Thuy, Hanoi, Vietnam} 
\email{{\tt hunghaduy@gmail.com}}
\author{Luong Dang Ky}
\address{Department of Mathematics, Quy Nhon University, 
170 An Duong Vuong, Quy Nhon, Binh Dinh, Viet Nam} 
\email{{\tt luongdangky@qnu.edu.vn}}
\author{Thai Thuan Quang}
\address{Department of Mathematics, Quy Nhon University, 
170 An Duong Vuong, Quy Nhon, Binh Dinh, Viet Nam} 
\email{thaithuanquang@qnu.edu.vn}
\keywords{Hausdorff operator, Hardy space, Hilbert transform, maximal function, holomorphic function}
\subjclass[2010]{47B38 (42B30)}

\begin{abstract} Let $\varphi$ be a nonnegative integrable function on $(0,\infty)$. It is well-known that the Hausdorff operator $\H_\varphi$ generated by $\varphi$ is bounded on the real Hardy space $H^1(\R)$. The aim of this paper is to give the exact norm of $\H_\varphi$. More precisely, we prove that
$$\|\H_\varphi\|_{H^1(\R)\to H^1(\R)}= \int_0^\infty \varphi(t)dt.$$
\end{abstract}

\maketitle
\newtheorem{theorem}{Theorem}[section]
\newtheorem{lemma}{Lemma}[section]
\newtheorem{proposition}{Proposition}[section]
\newtheorem{remark}{Remark}[section]
\newtheorem{corollary}{Corollary}[section]
\newtheorem{definition}{Definition}[section]
\newtheorem{example}{Example}[section]
\numberwithin{equation}{section}
\newtheorem{Theorem}{Theorem}[section]
\newtheorem{Lemma}{Lemma}[section]
\newtheorem{Proposition}{Proposition}[section]
\newtheorem{Remark}{Remark}[section]
\newtheorem{Corollary}{Corollary}[section]
\newtheorem{Definition}{Definition}[section]
\newtheorem{Example}{Example}[section]
\newtheorem*{theorema}{Theorem A}

\section{Introduction and main result} 
\allowdisplaybreaks

Let $\varphi$ be a locally integrable function on $(0,\infty)$. The {\it Hausdorff operator} $\H_\varphi$ is defined for suitable functions $f$ by
$$\H_\varphi f(x)=\int_0^\infty f\left(\frac{x}{t}\right) \frac{\varphi(t)}{t} dt.$$

The Hausdorff operator is an interesting operator in harmonic analysis. There are many classical operators in analysis which are special cases of the Hausdorff operator if one chooses suitable kernel functions $\varphi$, such as the classical Hardy operator, its adjoint operator, the Ces\`aro type operators, the Riemann-Liouville
fractional integral operator,... See the survey article \cite{Li} and the references therein. In the recent years, there is an increasing interest on the study of boundedness of the Hausdorff operator on the real Hardy spaces, see for example \cite{An, CFZ, FL, Li, LM1, LM2, LM3, Mo, RF}.

Let $\Phi$ be a function in the Schwartz space $\mathcal S(\R)$ satisfying $\int_{\R}\Phi(x)dx\ne 0$. Set $\Phi_t(x):= t^{-1}\Phi(x/t)$. Following Fefferman and Stein \cite{FS, St}, we define the {\it real Hardy space} $H^1(\R)$ as the space of functions $f\in L^1(\R)$ such that 
$$\|f\|_{H^1(\R)} := \left\|M_{\Phi} f\right\|_{L^1(\R)}<\infty,$$
where $M_{\Phi} f$ is the {\it smooth maximal function} of $f$ defined by
$$M_{\Phi}f(x)= \sup_{t>0}|f*\Phi_t(x)|,\quad x\in\R.$$

Remark that $\|\cdot\|_{H^1(\R)}$ defines a norm on $H^1(\R)$, whose size depends on the choice of $\Phi$, but the space $H^1(\R)$ does not depend on this choice.

Let $\varphi$ be a nonnegative function in $L^1_{\rm loc}(0,\infty)$. Although, it was shown in \cite{FL} that $\H_\varphi$ is bounded on $H^1(\R)$ if and only if $\varphi\in L^1(0,\infty)$, the exact norm $\|\H_\varphi\|_{H^1(\R)\to H^1(\R)}$ is still unknown.

\vskip 0.2cm

Our main result is as follows.

\begin{theorem}\label{main theorem}
	Let $\varphi$ be a nonnegative function in $L^1(0,\infty)$. Then 
	$$\|\H_\varphi\|_{H^1(\R)\to H^1(\R)}= \int_0^\infty \varphi(t)dt.$$
\end{theorem}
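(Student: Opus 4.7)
My plan is to split the proof into the standard two halves: the upper bound $\|\mathcal H_\varphi\|_{H^1\to H^1}\le \int_0^\infty\varphi$ and the matching lower bound.

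For the upper bound I would establish the pointwise domination
$$M_\Phi(\mathcal H_\varphi f)(x)\le \mathcal H_\varphi(M_\Phi f)(x),\qquad x\in\mathbb R.$$
To prove it, make the substitution $u=y/t$ in the double integral defining $(\mathcal H_\varphi f)*\Phi_s$, obtaining
$$(\mathcal H_\varphi f)*\Phi_s(x)=\int_0^\infty \varphi(t)\,(f*\Phi_{s/t})(x/t)\,\frac{dt}{t};$$
pulling the absolute value inside and taking $\sup_{s>0}$ under the integral gives the desired bound. Since $M_\Phi f\ge 0$, Tonelli yields $\|\mathcal H_\varphi(M_\Phi f)\|_{L^1}=(\int\varphi)\,\|M_\Phi f\|_{L^1}$, and consequently $\|\mathcal H_\varphi f\|_{H^1}=\|M_\Phi(\mathcal H_\varphi f)\|_{L^1}\le (\int\varphi)\|f\|_{H^1}$.

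For the lower bound I would use the Fefferman--Stein duality $(H^1(\mathbb R))^*=BMO(\mathbb R)/\{\mathrm{const}\}$ realized by the pairing $\langle f,g\rangle=\int fg$. Irrespective of which admissible maximal norm is chosen on $H^1$, the Banach space identity $\|\mathcal H_\varphi\|_{H^1\to H^1}=\|\mathcal H_\varphi^*\|$ on the dual space is exact. Computing the formal adjoint via Fubini gives $\mathcal H_\varphi^*g(y)=\int_0^\infty \varphi(t)\,g(ty)\,dt$ (modulo constants). The crucial observation is to test on the canonical BMO function $g(y)=\log|y|$: since $g(ty)-g(ty_0)=\log|y|-\log|y_0|$ is independent of $t$, one obtains
$$\mathcal H_\varphi^*\log|y|=\Big(\int_0^\infty \varphi(t)\,dt\Big)\log|y|\pmod{\mathrm{constants}},$$
so $\log|y|$ is an eigenvector of $\mathcal H_\varphi^*$ with eigenvalue $\int\varphi$. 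Since $\log|y|$ represents a nonzero class in $BMO/\{\mathrm{const}\}$ (any $H^1$ atom with support off-centered from the origin pairs nontrivially with it), this gives $\|\mathcal H_\varphi^*\|\ge\int\varphi$, and therefore $\|\mathcal H_\varphi\|_{H^1\to H^1}\ge\int\varphi$.

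The main obstacle is making the eigenvector identity rigorous even when $\int_0^\infty \varphi(t)|\log t|\,dt=\infty$ (so that $\mathcal H_\varphi^*\log|y|$ is not pointwise defined). The resolution is to read everything at the level of functionals on $H^1$: for any $f\in H^1$ we have $\int f=0$, so after Fubini (first for $H^1$-atoms $f$, where every integral converges absolutely, and then by atomic decomposition for general $f$) the potentially divergent additive constant $\int \varphi(t)\log t\,dt$ is annihilated, and one gets the clean identity $\langle \mathcal H_\varphi f,\log|y|\rangle=(\int\varphi)\langle f,\log|y|\rangle$ on all of $H^1$, which is what drives the lower bound. Combined with the upper bound this proves $\|\mathcal H_\varphi\|_{H^1\to H^1}=\int_0^\infty\varphi(t)\,dt$.
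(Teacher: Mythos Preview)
Your upper bound is the same as the paper's. Your lower bound, however, is a genuinely different route: the paper builds \emph{approximate eigenvectors in $H^1$} --- the boundary values $f_\varepsilon$ of $F_\varepsilon(z)=(z+i)^{-1-\varepsilon}$, for which it shows $\|\H_{\varphi_\delta}f_\varepsilon-(\int\varphi_\delta)f_\varepsilon\|_{H^1}/\|f_\varepsilon\|_{H^1}\to 0$ as $\varepsilon\to 0$ (first for $\varphi$ supported in $[\delta,1]$, then lifting by scaling/truncation). You instead pass to the adjoint on $BMO/\{\mathrm{const}\}$ and exhibit $\log|y|$ as an \emph{exact} eigenvector with eigenvalue $\int\varphi$; since $\|(\H_\varphi)^*\|=\|\H_\varphi\|$ exactly (regardless of which admissible $H^1$ norm is fixed), this is both shorter and more conceptual.

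There is, however, a real gap in your ``resolution of the main obstacle''. Your claim that for an $H^1$-atom $a$ ``every integral converges absolutely'' is precisely what fails when $\int_0^\infty\varphi(t)|\log t|\,dt=\infty$. After the substitution $u=y/t$ the relevant quantity is
\[
\int_0^\infty\varphi(t)\int_{\R}|a(u)|\,\bigl|\log t+\log|u|\bigr|\,du\,dt,
\]
and the $|\log t|$ contribution diverges (take e.g.\ $\varphi(t)=t^{-1}(\log t)^{-2}\chi_{(e,\infty)}(t)$ and any atom supported in $[1,3]$). So Fubini is unavailable, and the cancellation $\int a=0$ cannot be invoked \emph{after} the swap you did not justify. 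The fix is not the one you wrote but rather a truncation in $\varphi$: set $\varphi_n=\varphi\chi_{[1/n,n]}$, for which $\int\varphi_n|\log t|<\infty$ and your Fubini/eigenvector argument is rigorous, giving $\|\H_{\varphi_n}\|\ge\int\varphi_n$; then use your upper bound to get $\|\H_\varphi-\H_{\varphi_n}\|\le\int(\varphi-\varphi_n)\to 0$, whence $\|\H_\varphi\|\ge\int\varphi_n-\int(\varphi-\varphi_n)\to\int\varphi$. The paper, incidentally, also has to truncate (to $\varphi\chi_{[\delta,1]}$ and then via dilation), so this step appears to be intrinsic rather than an artifact of either method.
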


In Theorem \ref{main theorem}, it should be pointed out that the norm of the Hausdorff operator $\H_\varphi$ ($\int_0^\infty \varphi(t)dt$) does not depend on the choice of the above function $\Phi$. Moreover, it still holds when the above norm $\|\cdot\|_{H^1(\R)}$ is replaced by
$$\|f\|_{H^1(\R)}:= \|f\|_{L^1(\R)} + \|H(f)\|_{L^1(\R)},$$
where $H(f)$ is the {\it Hilbert transform} $H$ of $f\in L^1(\R)$ defined by
$$H(f)(x)= \frac{1}{\pi}\; {\rm p.v.}\int_{-\infty}^{\infty}\frac{f(x-y)}{y}dy, \quad {\rm a.e.} \; x\in\R.$$
See the last section for details.

\begin{corollary}
	Let $\varphi\in L^1(0,\infty)$. Then $\H_\varphi$ is bounded on $H^1(\R)$, moreover,
	$$\left|\int_0^\infty \varphi(t)dt\right|\leq \|\H_\varphi\|_{H^1(\R)\to H^1(\R)} \leq \int_0^\infty |\varphi(t)|dt.$$
\end{corollary}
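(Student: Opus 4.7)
For the upper bound I would combine sublinearity of the smooth maximal function with Fubini. Pushing the convolution with $\Phi_s$ under the $t$-integral defining $\H_\varphi f$, then pulling absolute values inside and taking $\sup_{s>0}$, produces the pointwise estimate
$$M_\Phi(\H_\varphi f)(x)\leq \int_0^\infty \frac{|\varphi(t)|}{t}M_\Phi f(x/t)\,dt=\H_{|\varphi|}(M_\Phi f)(x).$$
Integrating in $x$ and applying Fubini once more (which gives $\|\H_{|\varphi|}g\|_{L^1(\R)}\leq \bigl(\int_0^\infty|\varphi(t)|\,dt\bigr)\|g\|_{L^1(\R)}$ for $g\geq 0$) yields $\|\H_\varphi f\|_{H^1(\R)}\leq \bigl(\int_0^\infty|\varphi(t)|\,dt\bigr)\|f\|_{H^1(\R)}$.

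For the lower bound I would reduce to Theorem~\ref{main theorem}. First, since multiplication by a unimodular complex scalar is an isometry of $H^1(\R)$, the operators $\H_{e^{-i\alpha}\varphi}=e^{-i\alpha}\H_\varphi$ and $\H_\varphi$ share the same norm; taking $\alpha=\arg\int_0^\infty\varphi(t)\,dt$, I may assume $\int_0^\infty\varphi(t)\,dt\geq 0$. Next, the pointwise inequality $|g*\Phi_s|\geq |({\rm Re}\,g)*\Phi_s|$ (valid since $\Phi$ is real-valued) yields $\|g\|_{H^1}\geq \|{\rm Re}\,g\|_{H^1}$; since ${\rm Re}(\H_\varphi f)=\H_{{\rm Re}\,\varphi}f$ for real-valued $f\in H^1$, taking the supremum over real $f$ produces $\|\H_\varphi\|_{H^1\to H^1}\geq \|\H_{{\rm Re}\,\varphi}\|_{H^1\to H^1}$.

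Finally, I split ${\rm Re}\,\varphi=\psi^+-\psi^-$ into positive and negative parts. Theorem~\ref{main theorem} applied to each nonnegative $\psi^\pm\in L^1(0,\infty)$ gives $\|\H_{\psi^\pm}\|_{H^1\to H^1}=\int_0^\infty\psi^\pm(t)\,dt$, and the reverse triangle inequality applied to $\H_{{\rm Re}\,\varphi}=\H_{\psi^+}-\H_{\psi^-}$ then delivers
$$\|\H_{{\rm Re}\,\varphi}\|_{H^1\to H^1}\geq \bigl|\|\H_{\psi^+}\|-\|\H_{\psi^-}\|\bigr|=\biggl|\int_0^\infty\psi^+(t)\,dt-\int_0^\infty\psi^-(t)\,dt\biggr|=\biggl|\int_0^\infty\varphi(t)\,dt\biggr|,$$
where the last equality uses the first-step normalization $\int_0^\infty\varphi\geq 0$.

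The main subtlety lies in the comparison $\|\H_\varphi\|_{H^1\to H^1}\geq \|\H_{{\rm Re}\,\varphi}\|_{H^1\to H^1}$: passing from the inequality valid only for real test functions to the full complex operator norm of $\H_{{\rm Re}\,\varphi}$ implicitly relies on the fact that for real-kernel Hausdorff operators the complex and real $H^1$ operator norms coincide, which should follow because the extremizing sequences from Theorem~\ref{main theorem} can be taken real-valued. This obstacle can be bypassed entirely through $H^1$-$\BMO$ duality: a direct computation shows that the adjoint $\H_\varphi^*g(y)=\int_0^\infty\varphi(t)g(ty)\,dt$ acts on $\log|y|\in\BMO(\R)$ as multiplication by $\int_0^\infty\varphi(t)\,dt$ modulo constants, so $\|\H_\varphi\|_{H^1\to H^1}=\|\H_\varphi^*\|_{\BMO\to\BMO}\geq \bigl|\int_0^\infty\varphi(t)\,dt\bigr|$ follows immediately since $\log|y|$ has positive $\BMO$ seminorm.
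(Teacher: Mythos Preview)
Your upper bound is correct and is exactly the argument of Lemma~\ref{key lemma}(i) with $|\varphi|$ in place of $\varphi$.

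The lower bound is where you diverge from the paper and where the difficulties lie. The paper gives no separate proof of the Corollary because the proof of Theorem~\ref{main theorem} already contains it: the estimate in Lemma~\ref{key lemma}(ii) showing
$\|\H_{\varphi_\delta}f_\varepsilon-(\int\varphi_\delta)f_\varepsilon\|_{H^1}/\|f_\varepsilon\|_{H^1}\to 0$
nowhere uses the sign of $\varphi$, only integrability on $[\delta,1]$. Hence $|\int\varphi_\delta|\le\|\H_{\varphi_\delta}\|$ for \emph{any} complex $\varphi$, and the same truncation and scaling limits as in the proof of Theorem~\ref{main theorem} (with $\int|\varphi|$ replacing $\int\varphi$ in the error terms) give $|\int_0^\infty\varphi|\le\|\H_\varphi\|$ directly.

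Both of your proposed routes have genuine gaps. In the first, you correctly flag the problem: from $\|\H_\varphi f\|\ge\|\H_{{\rm Re}\,\varphi}f\|$ for real $f$ you obtain only $\|\H_\varphi\|\ge\|\H_{{\rm Re}\,\varphi}\|_{\text{real }H^1\to\text{real }H^1}$, and the reverse triangle inequality then requires $\|\H_{\psi^\pm}\|_{\text{real }H^1}=\int\psi^\pm$. Your patch (``the extremizing sequences can be taken real-valued'') is not supported by the paper --- its extremizers $f_\varepsilon$ are boundary values of holomorphic functions and are genuinely complex-valued.

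In the $\BMO$ route, the formal identity $\H_\varphi^*(\log|\cdot|)=(\int\varphi)\log|\cdot|+\int_0^\infty\varphi(t)\log t\,dt$ breaks down whenever $\int_0^\infty|\varphi(t)\log t|\,dt=\infty$, which can occur for $\varphi\in L^1(0,\infty)$ (take $\varphi(t)=t^{-1}(\log(1/t))^{-2}\chi_{(0,1/2)}(t)$). Even working via the duality pairing and using $\int f=0$ to kill the $\log t$ term, the required Fubini step needs exactly this integrability. The idea can be rescued by first truncating $\varphi$ to an interval $[\delta,M]\subset(0,\infty)$ and then passing to the limit --- but at that point you are essentially reproducing the paper's truncation strategy with a different test object, so nothing is gained over simply rereading the proof of Theorem~\ref{main theorem} without the nonnegativity hypothesis.
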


Throughout the whole article, we denote by $C$ a positive constant which is independent of the main parameters, but it may vary from line to line. The symbol $A \lesssim B$  means that $A\leq C B$. If $A \lesssim B$ and $B\lesssim A$, then we  write $A\sim B$.  For any $E\subset \R$, we denote by $\chi_E$ its characteristic function.


\section{Proof of Theorem \ref{main theorem}}

Let $P$ be the Poisson kernel on $\R$, that is, $P(x)=\frac{1}{x^2+1}$ for all $x\in\R$. For any $t>0$, set $P_t(x):= \frac{t}{x^2+t^2}$.  The {\it Poisson maximal function} $M_P f$ of a function $f\in L^1(\R)$ is then defined by
$$M_P f(x) = \sup_{t>0} |P_t*f(x)|,\quad x\in\R.$$

Let $\mathbb C_+$ be the upper half-plane in the complex plane. The Hardy space $\mathcal H_a^1(\mathbb C_+)$ is defined as the set of all holomorphic functions $F$ on $\mathbb C_+$ such that
$$\|F\|_{\mathcal H_a^1(\mathbb C_+)}:= \sup_{y>0} \int_{-\infty}^{\infty} |F(x+iy)| dx <\infty.$$

The following two lemmas are classical and can be found in \cite{Du, Ga, St}.

\begin{lemma}\label{some characterizations of real Hardy spaces}
	Let $f\in L^1(\R)$. Then the following conditions are equivalent:
	\begin{enumerate}[\rm (i)]
		\item $f\in H^1(\R)$.
		\item $H(f)\in L^1(\R)$.
		\item $M_P f\in L^1(\R)$.
	\end{enumerate}
	Moreover, in that case, 
	$$\|f\|_{H^1(\R)}\sim \|f\|_{L^1(\R)} + \|H(f)\|_{L^1(\R)} \sim \|M_P f\|_{L^1(\R)}.$$
\end{lemma}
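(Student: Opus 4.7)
The plan is to establish the three-way equivalence by proving (i) $\Leftrightarrow$ (iii) and (iii) $\Leftrightarrow$ (ii), using the Poisson maximal function as the bridge. The conceptual point is that $P_t$ is the boundary kernel for harmonic extensions to $\mathbb C_+$, so it simultaneously connects the real-variable (smooth maximal) side with the complex-variable (Hilbert transform) side.

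For (i) $\Leftrightarrow$ (iii): although $P$ is not a Schwartz function, it admits an integrable, radial, decreasing majorant, and so does any Schwartz $\Phi$ with $\int\Phi\neq 0$. The Fefferman--Stein theory of the grand maximal function $M_{\mathrm{grand}}f=\sup_{\phi\in\mathcal F_N,\,t>0}|\phi_t*f|$ gives, on the one hand, $M_P f \le C\,M_{\mathrm{grand}} f \le C'\,M_\Phi f$ after decomposing $P$ against a finite family of Schwartz dilates, whence $\|M_P f\|_{L^1(\R)}\lesssim \|f\|_{H^1(\R)}$. For the reverse, $\|M_\Phi f\|_{L^1(\R)}\lesssim\|M_P f\|_{L^1(\R)}$ can be obtained either via the atomic decomposition of $H^1(\R)$ (each $H^1$-atom $a$ with $\mathrm{supp}\,a\subseteq I$, $\int a=0$, $\|a\|_\infty\le|I|^{-1}$ has $\|M_P a\|_{L^1(\R)}$ uniformly bounded, by splitting into the ``near'' part over $2I$ and the ``far'' part where the cancellation $\int a=0$ is exploited) or by showing directly that the Poisson maximal function dominates the nontangential, and hence the radial, Schwartz maximal function.

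For (ii) $\Leftrightarrow$ (iii): given $f,\,Hf\in L^1(\R)$, form $F(x+it):=P_t*f(x)+iQ_t*f(x)$, where $Q_t$ is the conjugate Poisson kernel, so that $F$ is holomorphic on $\mathbb C_+$ with boundary value $f+iHf\in L^1(\R)$ and hence $F\in\mathcal H_a^1(\mathbb C_+)$. Apply the Riesz factorization $F=F_1F_2$ with $F_j\in\mathcal H_a^2(\mathbb C_+)$ and $\|F_j\|_{\mathcal H_a^2}^2=\|F\|_{\mathcal H_a^1}$; since $|F_j|^2$ is subharmonic, $|F_j(x+it)|^2\le P_t*|F_j(\cdot+i0)|^2(x)\le C\,\mathcal M(|F_j(\cdot+i0)|^2)(x)$, where $\mathcal M$ is the Hardy--Littlewood maximal operator. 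Cauchy--Schwarz and the $L^2$-boundedness of $\mathcal M$ then yield $\|\sup_t|F(\cdot+it)|\|_{L^1(\R)}\lesssim \|F\|_{\mathcal H_a^1(\mathbb C_+)}$, and since $M_Pf(x)\le\sup_t|F(x+it)|$, this gives $\|M_Pf\|_{L^1(\R)}\lesssim \|f\|_{L^1(\R)}+\|Hf\|_{L^1(\R)}$. For the converse, one invokes the Burkholder--Gundy--Silverstein theorem: if $M_Pf\in L^1(\R)$, then the conjugate Poisson integral $Q_t*f$ also has its maximal function in $L^1(\R)$, and its boundary value is precisely $Hf$.

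The main obstacle is this last Burkholder--Gundy--Silverstein step: deducing $Hf\in L^1(\R)$ from control of $M_Pf$ alone requires passing from the real part of a holomorphic function on $\mathbb C_+$ to its conjugate in $L^1$, and the weak-$(1,1)$ bound for $H$ is not strong enough by itself. The standard resolution uses a Calder\'on--Zygmund decomposition adapted to the level sets of $M_Pf$ combined with subharmonicity estimates for a suitable power $|F|^p$ with $0<p<1$, and this constitutes the technical heart of the lemma; the remaining quantitative comparisons then assemble into the stated norm equivalences $\|f\|_{H^1(\R)}\sim \|f\|_{L^1(\R)}+\|H(f)\|_{L^1(\R)}\sim \|M_Pf\|_{L^1(\R)}$.
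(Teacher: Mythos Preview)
Your sketch is a correct outline of the classical argument, but note that the paper does not give its own proof of this lemma: it simply states the result as classical and refers to \cite{Du, Ga, St}. So there is nothing to compare against beyond observing that the route you describe---passing through the grand maximal function for (i)~$\Leftrightarrow$~(iii), and using the holomorphic extension $F=P_t*f+iQ_t*f$, Riesz factorization in $\mathcal H_a^1(\mathbb C_+)$, and the Burkholder--Gundy--Silverstein theorem for (ii)~$\Leftrightarrow$~(iii)---is exactly the standard proof found in those references.
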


\begin{lemma}\label{boundary value function}
	Let $F\in \H_a^1(\mathbb C_+)$. Then the boundary value function $f$ of $F$, which is defined by
		$$f(x)=\lim_{y\to 0} F(x+iy), \quad\mbox{a.e.}\; x\in \R,$$
	is in $H^1(\R)$. Moreover, 
	$$\|f\|_{H^1(\R)}\sim \|f\|_{L^1(\R)}= \|F\|_{\H^1_a(\mathbb C_+)}$$
	and	$F(x+iy)= P_y*f(x)$ for all $x+iy\in\mathbb C_+$.
\end{lemma}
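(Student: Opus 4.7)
The plan is to pass from the holomorphic function $F$ on the upper half-plane to a boundary measure on $\R$, upgrade that measure to an $L^1$ function using the F.\ and M.\ Riesz theorem, recover the Poisson representation, and then derive $H^1(\R)$-membership via Lemma~\ref{some characterizations of real Hardy spaces}.

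I would begin with the observation that $|F|$ is subharmonic on $\mathbb C_+$. Combined with the Poisson representation on the shifted half-planes $\{\operatorname{Im} z>y_0\}$, this yields for every $y>y_0>0$ the pointwise majorization
$$|F(x+iy)|\le (P_{y-y_0}*|F(\cdot+iy_0)|)(x).$$
Integrating in $x$ and using that $P_{y-y_0}$ has total mass $1$ shows that $y\mapsto\|F(\cdot+iy)\|_{L^1(\R)}$ is non-increasing, so $\|F\|_{\H_a^1(\mathbb C_+)}=\lim_{y\to0^+}\|F(\cdot+iy)\|_{L^1(\R)}$. The family $\{F(\cdot+iy)\,dx\}_{y>0}$ is then bounded in the dual of $C_0(\R)$, and by Banach--Alaoglu there is a sequence $y_n\downarrow0$ and a finite complex Borel measure $\mu$ on $\R$ with $F(\cdot+iy_n)\,dx\rightharpoonup d\mu$ in the weak-$*$ sense.

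Passing to the limit in the identity $F(x+i(y+y_n))=(P_y*F(\cdot+iy_n))(x)$ gives the Poisson representation $F(x+iy)=\int_\R P_y(x-t)\,d\mu(t)$ for every $x+iy\in\mathbb C_+$. The pivotal step is now the \emph{F.\ and M.\ Riesz theorem}: because $F$ is holomorphic and not merely harmonic, $\mu$ is absolutely continuous with respect to Lebesgue measure, so $d\mu=f(t)\,dt$ for some $f\in L^1(\R)$. This yields at once $F(x+iy)=(P_y*f)(x)$ and the norm identity $\|f\|_{L^1(\R)}=\|F\|_{\H_a^1(\mathbb C_+)}$. A standard Lebesgue differentiation argument for Poisson integrals of $L^1$ functions then gives $F(x+iy)\to f(x)$ as $y\to0^+$ for a.e.\ $x\in\R$, so $f$ is indeed the boundary function prescribed in the statement.

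To upgrade $f\in L^1(\R)$ to $f\in H^1(\R)$, I would split $f=f_1+if_2$ and $F=u+iv$ with $u,v$ real harmonic conjugates satisfying $u=P_y*f_1$, $v=P_y*f_2$. The Cauchy--Riemann equations (equivalently, the conjugate Poisson kernel connecting $u$ and $v$) force the boundary relation $f_2=H(f_1)$. Applying $H$ once more and using $H^2=-\mathrm{Id}$ gives
$$H(f)=H(f_1)+iH(f_2)=f_2-if_1=-if\in L^1(\R).$$
Lemma~\ref{some characterizations of real Hardy spaces} now delivers $f\in H^1(\R)$ together with $\|f\|_{H^1(\R)}\sim\|f\|_{L^1(\R)}+\|H(f)\|_{L^1(\R)}\sim\|f\|_{L^1(\R)}$, completing every assertion of the lemma. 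The main obstacle is the F.\ and M.\ Riesz step, since without the holomorphicity of $F$ the weak-$*$ limit $\mu$ could carry a nontrivial singular part that no soft argument could rule out; for this I would cite the classical proofs in~\cite{Du,Ga}.
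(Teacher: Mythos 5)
The paper itself offers no proof of this lemma: it is stated as classical and referred to \cite{Du,Ga,St}, so there is no in-paper argument to compare against. Your proposal is, in substance, the standard proof from those references, and I see no genuine gap: harmonic majorization of $|F|$ gives monotonicity of the $L^1$ means, Banach--Alaoglu produces a boundary measure, the F.\ and M.\ Riesz theorem removes the singular part (you correctly identify this as the crux), a.e.\ convergence of Poisson integrals of $L^1$ functions recovers the vertical boundary limit, and the conjugate-function identity $H(f)=-if$ combined with Lemma~\ref{some characterizations of real Hardy spaces} yields $f\in H^1(\R)$ with $\|f\|_{H^1(\R)}\sim\|f\|_{L^1(\R)}$. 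Two small points deserve tightening. First, with the paper's normalization $P_t(x)=t/(x^2+t^2)$ the kernel has total mass $\pi$, not $1$, so your majorization, the representation $F(x+iy)=P_y*f(x)$, and the identification of the boundary value of the conjugate Poisson integral with $H(f_1)$ each pick up a factor of $\pi$; either insert these constants or work with the normalized kernel throughout. Second, the step $f_2=H(f_1)$ requires noting that the additive constant between $v$ and the conjugate Poisson integral of $f_1$ vanishes (both tend to $0$ as $y\to\infty$ since $f_1,f_2\in L^1$), and rather than invoking $H^2=-\mathrm{Id}$ on $L^1$ --- where $H$ need not even map into $L^1$ --- it is cleaner either to apply the same conjugate-function argument to $iF\in\H_a^1(\bC_+)$, which gives $f_1=-H(f_2)$ directly, or to observe that once $H(f_1)=f_2\in L^1(\R)$ is known, $f_1$ lies in $H^1(\R)$ by Lemma~\ref{some characterizations of real Hardy spaces}, where $H^2=-\mathrm{Id}$ is legitimate.
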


In order to prove Theorem \ref{main theorem}, we also need the following key lemma.

\begin{lemma}\label{key lemma}
	Let $\varphi$ be a nonnegative function in $L^1(0,\infty)$. Then
	\begin{enumerate}[\rm (i)]
		\item $\H_\varphi$ is bounded on $H^1(\R)$, moreover,
		$$\|\H_\varphi\|_{H^1(\R)\to H^1(\R)}\leq \int_0^\infty \varphi(t)dt.$$
		\item If supp $\varphi\subset [0,1]$, then
		$$\|\H_\varphi\|_{H^1(\R)\to H^1(\R)}= \int_0^1 \varphi(t)dt.$$
	\end{enumerate}
\end{lemma}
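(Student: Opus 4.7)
My plan for part (i) is Minkowski's integral inequality applied pointwise to the smooth maximal function. A direct change of variables gives the dilation identity
\[
\bigl(f(\cdot/t)\bigr)\ast \Phi_s(x) \;=\; \bigl(f\ast \Phi_{s/t}\bigr)(x/t),
\]
so that $(\H_\varphi f)\ast \Phi_s(x) = \int_0^\infty (f\ast\Phi_{s/t})(x/t)\,\varphi(t)/t\,dt$. Taking absolute values, pulling $\sup_{s>0}$ inside the integral, and bounding by $M_\Phi f(x/t)$ yields
\[
M_\Phi(\H_\varphi f)(x) \;\le\; \int_0^\infty M_\Phi f(x/t)\,\frac{\varphi(t)}{t}\,dt.
\]
Integrating in $x$ and applying Fubini gives $\|\H_\varphi f\|_{H^1}\le\bigl(\int_0^\infty \varphi(t)\,dt\bigr)\|f\|_{H^1}$, which proves (i).

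For part (ii), the upper bound follows from (i). For the matching lower bound my plan is to work inside the subspace of $H^1(\R)$ consisting of boundary values of functions in $\H_a^1(\bC_+)$. If $F\in\H_a^1$ has boundary value $f$, then a direct computation (differentiation under the integral, justified by the decay of $F$ at infinity) shows that $G(z):=\int_0^1 F(z/t)\,\varphi(t)/t\,dt$ is holomorphic on $\bC_+$; the same Minkowski argument as in (i) applied on each horizontal slice $\Im z = y$ gives $\|G\|_{\H_a^1}\le(\int_0^1 \varphi)\|F\|_{\H_a^1}$, and dominated convergence shows that $G$ has boundary value $\H_\varphi f$. By Lemma~\ref{boundary value function}, the task then reduces to producing a family $\{F_n\}\subset\H_a^1$ along which $\|G_n\|_{L^1(\R)}/\|F_n\|_{L^1(\R)}\to\int_0^1\varphi(t)\,dt$.

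To realize this asymptotic I would work with kernels whose boundary modulus is concentrated and pushed far from the origin, so that the integrand defining $G$ acquires a nearly constant complex phase and Minkowski's inequality in $L^1(\R)$ becomes sharp in the limit. A natural candidate is the translated family $F_B(z) := (z-B+iA_0)^{-n}$ (with $n\ge 2$ and $A_0>0$ fixed, and $B\to\infty$): the modulus $|F_B(x+iy)|=((x-B)^2+(y+A_0)^2)^{-n/2}$ is strictly decreasing in $y\ge 0$, so $M_P f_B = |f_B|$ and $\|f_B\|_{H^1}=\|f_B\|_{L^1}$ (for the Poisson choice $\Phi=P$); the same monotonicity holds for the modulus of the integrand in $G_B(x+iy)$, giving an analogous identification for $\mathcal H_\varphi f_B$. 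Since translation leaves $\|F_B\|_{L^1}$ independent of $B$, any gain comes from the ratio of $L^1$-norms on the boundary.

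The main obstacle is establishing the sharp asymptotic $\|G_B|_\R\|_{L^1(\R)}/\|F_B\|_{L^1(\R)}\to \int_0^1\varphi(t)\,dt$ as $B\to\infty$. The mass of $G_B(x)$ concentrates near $t=x/B$, where the argument of $(x-Bt+iA_0 t)^{-n}$ sweeps through an angle of order $n\pi$, producing residual cancellation that must be quantified. I would attempt either a stationary-phase estimate showing that the surviving mass captures the full integral $\int_0^1 \varphi$, or replace $F_B$ by a more carefully designed outer function whose phase $\arg F(z/t)$ is nearly constant in $t\in(0,1]$ for real $z$ (a condition under which Minkowski is exactly sharp). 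Once the sharp $L^1$ asymptotic is in hand, the uniform equivalence from Lemma~\ref{some characterizations of real Hardy spaces} together with the monotonicity properties above converts it into the matching $H^1$ asymptotic and yields the lower bound, completing the proof of (ii).
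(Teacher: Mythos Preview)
Your argument for (i) is correct and matches the paper's.

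For (ii) there are two genuine gaps. First, the sharp $L^1$ asymptotic is, as you yourself say, ``the main obstacle'' and is left unproved; for the translated family $F_B(z)=(z-B+iA_0)^{-n}$ with $n\ge 2$ it is in fact unlikely to hold, since $\int_{\R}(u+iA_0)^{-n}\,du=0$ (close the contour in the upper half-plane) forces heavy cancellation in the $t$-integral defining $G_B(x)$ over the bulk range of $x$. Second, and more structurally, even a sharp $L^1$ ratio cannot be transferred to $H^1$ in the way you describe: the Poisson kernel is not in $\mathcal S(\R)$, so you may not take $\Phi=P$; Lemma~\ref{some characterizations of real Hardy spaces} gives only $\|f\|_{H^1}\sim\|M_Pf\|_{L^1}$ with unspecified constants $c_1\le c_2$, and your monotonicity identities then yield merely $\|\H_\varphi f_B\|_{H^1}/\|f_B\|_{H^1}\ge(c_1/c_2)\,\|\H_\varphi f_B\|_{L^1}/\|f_B\|_{L^1}$, which cannot produce the sharp lower bound.

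The paper's device sidesteps both issues. After truncating to $\varphi_\delta=\varphi\chi_{[\delta,1]}$, it tests on the boundary values $f_\varepsilon$ of the \emph{nearly homogeneous} functions $F_\varepsilon(z)=(z+i)^{-(1+\varepsilon)}\in\H_a^1(\bC_+)$ and shows that the \emph{difference} $\H_{\varphi_\delta}f_\varepsilon-(\int\varphi_\delta)f_\varepsilon$ has $H^1$-norm $o(\|f_\varepsilon\|_{H^1})$ as $\varepsilon\to 0$: via Lemma~\ref{boundary value function} its Poisson extension equals $\int_\delta^1[\phi_{\varepsilon,z}(t)-\phi_{\varepsilon,z}(1)]\varphi(t)\,dt$ with $\phi_{\varepsilon,z}(t)=t^\varepsilon(z+ti)^{-(1+\varepsilon)}$, a mean-value bound on $\phi'_{\varepsilon,z}$ controls $M_P$ of the difference, and $\|f_\varepsilon\|_{H^1}\sim\int_{\R}(x^2+1)^{-(1+\varepsilon)/2}dx\to\infty$ finishes. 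Because this ratio tends to \emph{zero}, the implicit constants in $\|\cdot\|_{H^1}\sim\|M_P(\cdot)\|_{L^1}$ are harmless; that is precisely what a ratio-to-a-nonzero-limit strategy cannot achieve.
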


\begin{proof}
(i)	For any $f\in H^1(\R)$, by the Fubini theorem, we have
	\begin{eqnarray*}
		M_\Phi(\H_\varphi f)(x) &=& \sup_{r>0} \left|\int_{\R} dy \int_0^\infty \frac{1}{r}\Phi\left(\frac{x-y}{r}\right) f\left(\frac{y}{t}\right)\frac{\varphi(t)}{t}dt\right|\\
		&=& \sup_{r>0} \left|\int_0^\infty \Phi_{r/t}*f\left(\frac{x}{t}\right)\frac{\varphi(t)}{t} dt\right|\\
		&\leq&  \H_\varphi(M_\Phi f)(x)
	\end{eqnarray*}
	for all $x\in \R$. Hence,
	\begin{eqnarray*}
		\|\H_\varphi f\|_{H^1(\R)} &=& \|M_\Phi(\H_\varphi f)\|_{L^1(\R)}\\
		&\leq& \int_{\R} dx \int_0^\infty M_\Phi f\left(\frac{x}{t}\right) \frac{\varphi(t)}{t}dt \\
		&=& \int_0^\infty \varphi(t)dt \|M_\Phi(f)\|_{L^1(\R)}= \int_0^\infty \varphi(t)dt \|f\|_{H^1(\R)}.
	\end{eqnarray*}
	This proves that $\H_\varphi$ is bounded on $H^1(\R)$, moreover,
	\begin{equation}\label{key lemma, 1}
	\|\H_\varphi\|_{H^1(\R)\to H^1(\R)}\leq \int_0^\infty \varphi(t)dt.
	\end{equation}	
(ii)	Let $\delta\in (0,1)$ be arbitrary. By (\ref{key lemma, 1}), we see that
	$$\|\H_{\varphi_\delta}\|_{H^1(\R)\to H^1(\R)}\leq \int_0^\infty \varphi_\delta(t)dt = \int_{\delta}^{1} \varphi(t)dt<\infty$$
	and 
	\begin{equation}\label{key lemma, 2}
		\|\H_{\varphi}- \H_{\varphi_\delta}\|_{H^1(\R)\to H^1(\R)}\leq \int_0^\infty[\varphi(t)- \varphi_\delta(t)]dt=  \int_0^\delta \varphi(t)dt<\infty,
	\end{equation}
	where $\varphi_\delta(t):= \varphi(t)\chi_{[\delta,1]}(t)$ for all $t\in (0,\infty)$.
	
	For any $\varepsilon>0$, we define the function $F_\varepsilon:\mathbb C_+\to\mathbb C$ by
	$$F_\varepsilon(z)=\frac{1}{(z+i)^{1+\varepsilon}}$$
	where $\zeta^{1+\varepsilon}= |\zeta|^{1+\varepsilon} e^{i(1+\varepsilon)\arg \zeta}$ for all $\zeta\in\mathbb C$. Then, by Lemma \ref{boundary value function},
	\begin{equation}\label{key lemma, 3}
		\|f_\varepsilon\|_{H^1(\R)}\sim \|F_\varepsilon\|_{\H^1_a(\mathbb C_+)}= \int_{-\infty}^{\infty} \frac{1}{\sqrt{x^2+1}^{1+\varepsilon}}dx <\infty,
	\end{equation}
	where $f_\varepsilon$ is the boundary value function of $F_\varepsilon$.

	For all $z=x +iy\in \mathbb C_+$, by the Fubini theorem and Lemma \ref{boundary value function}, we get
	\begin{eqnarray*}
		&&P_y*\Big(\H_{\varphi_\delta}(f_\varepsilon)- f_{\varepsilon} \int_0^\infty \varphi_\delta(t)dt\Big)(x)\\
		&=& \int_0^\infty \frac{1}{(\frac{z}{t}+i)^{1+\varepsilon}}\frac{\varphi_\delta(t)}{t}dt- \frac{1}{(z+i)^{1+\varepsilon}}\int_0^\infty \varphi_\delta(t)dt \\
		&=&\int_{\delta}^{1} [\phi_{\varepsilon,z}(t)- \phi_{\varepsilon,z}(1)]\varphi(t)dt,
	\end{eqnarray*}
	where $\phi_{\varepsilon,z}(t):= \frac{t^\varepsilon}{(z+ti)^{1+\varepsilon}}$. For any $t\in [\delta,1]$, a simple calculus gives
	\begin{eqnarray*}
		|\phi_{\varepsilon,z}(t)- \phi_{\varepsilon,z}(1)| &\leq& |t-1| \sup_{s\in [\delta,1]}|\phi'_{\varepsilon,z}(s)|\\
		&\leq& \frac{\varepsilon \delta^{-2}}{\sqrt{x^2+1}^{1+\varepsilon}} + \frac{(1+\varepsilon) \delta^{-2}}{\sqrt{x^2+1}^{2+\varepsilon}}.
	\end{eqnarray*}
	Therefore, by Lemma \ref{some characterizations of real Hardy spaces}, 
	\begin{eqnarray*}
		\left\|\H_{\varphi_\delta}(f_\varepsilon)- f_{\varepsilon} \int_0^\infty \varphi_\delta(t)dt\right\|_{H^1(\R)} &\lesssim&  \left\|M_{P}\Big(\H_{\varphi_\delta}(f_\varepsilon)- f_{\varepsilon} \int_0^\infty \varphi_\delta(t)dt\Big)\right\|_{L^1(\R)}\\
		&\leq&\int_{\delta}^{1} \varphi(t)dt \int_{-\infty}^{\infty} \left[\frac{\varepsilon \delta^{-2}}{\sqrt{x^2+1}^{1+\varepsilon}} + \frac{(1+\varepsilon) \delta^{-2}}{\sqrt{x^2+1}^{2+\varepsilon}}\right] dx.
	\end{eqnarray*}
	This, together with (\ref{key lemma, 3}), yields
	\begin{eqnarray*}
		&&\frac{\left\|\H_{\varphi_\delta}(f_\varepsilon)- f_{\varepsilon}\int_0^\infty \varphi_\delta(t)dt\right\|_{H^1(\R)}}{\|f_\varepsilon\|_{H^1(\R)}}\\
		&\lesssim& \int_{\delta}^{1} \varphi(t)dt \left[\varepsilon \delta^{-2}+ \frac{(1+\varepsilon)\delta^{-2}\int_{-\infty}^{\infty} \frac{1}{x^2+1}dx}{\int_{-\infty}^{\infty} \frac{1}{\sqrt{x^2+1}^{1+\varepsilon}}dx}\right] \to 0
	\end{eqnarray*}
	as $\varepsilon \to 0$. As a consequence,
	$$\int_\delta^1 \varphi(t)dt = \int_0^\infty \varphi_\delta(t)dt \leq \|\H_{\varphi_\delta}\|_{H^1(\R)\to H^1(\R)}.$$
	This, combined with (\ref{key lemma, 2}), allows us to conclude that
	$$\|\H_\varphi\|_{H^1(\R)\to H^1(\R)}\geq \int_0^1 \varphi(t)dt - 2 \int_0^{\delta}\varphi(t)dt\to \int_0^1 \varphi(t)dt$$
	as $\delta\to 0$ since $\int_0^1 \varphi(t)dt<\infty$. Hence, by (\ref{key lemma, 1}),
	$$\|\H_\varphi\|_{H^1(\R)\to H^1(\R)}= \int_0^1 \varphi(t)dt.$$

\end{proof}

Now we are ready to give the proof of Theorem \ref{main theorem}.

\begin{proof}[Proof of Theorem \ref{main theorem}]
	 By Lemma \ref{key lemma},
	\begin{equation}\label{main theorem, 0}
		\|\H_{\varphi}\|_{H^1(\R)\to H^1(\R)}\leq \int_0^\infty \varphi(t)dt.
	\end{equation}

	For any $m>0$, we define $\varphi_m(t)= \varphi(mt)\chi_{(0,1]}(t)$ for all $t\in (0,\infty)$. Then, by Lemma \ref{key lemma}, we see that
	$$\left\|\H_{\varphi_m\left(\frac{\cdot}{m}\right)}\right\|_{H^1(\R)\to H^1(\R)}\leq \int_0^\infty \varphi_m\left(\frac{t}{m}\right)dt=\int_0^m \varphi(t)dt<\infty$$
	and
	\begin{equation}\label{main theorem, 2}
		\left\|\H_\varphi- \H_{\varphi_m\left(\frac{\cdot}{m}\right)}\right\|_{H^1(\R)\to H^1(\R)}\leq \int_{0}^{\infty} \left[\varphi(t)- \varphi_m\left(\frac{t}{m}\right)\right] dt= \int_m^\infty \varphi(t)dt<\infty.
	\end{equation}
		
	Noting that
	$$\left\|f\left(\frac{\cdot}{m}\right)\right\|_{H^1(\R)}= m \|f(\cdot)\|_{H^1(\R)}\quad\mbox{and}\quad \H_{\varphi_m\left(\frac{\cdot}{m}\right)}f= \H_{\varphi_m}f\left(\frac{\cdot}{m}\right)$$
	for all $f\in H^1(\R)$, Lemma \ref{key lemma} yields
	$$\left\|\H_{\varphi_m\left(\frac{\cdot}{m}\right)}\right\|_{H^1(\R)\to H^1(\R)} = m \|\H_{\varphi_m}\|_{H^1(\R)\to H^1(\R)} = m \int_0^1 \varphi(mt)dt= \int_0^m \varphi(t)dt.$$
	Combining this with (\ref{main theorem, 2}) allows us to conclude that
	$$\|\H_\varphi\|_{H^1(\R)\to H^1(\R)}\geq \int_0^\infty \varphi(t)dt - 2 \int_m^\infty \varphi(t)dt \to \int_0^\infty \varphi(t)dt$$
	as $m\to\infty$ since $\int_0^\infty \varphi(t)dt<\infty$. Hence, by (\ref{main theorem, 0}),
	$$\|\H_\varphi\|_{H^1(\R)\to H^1(\R)} = \int_0^\infty \varphi(t)dt,$$
	which ends the proof of Theorem \ref{main theorem}.
	
\end{proof}

\section{Appendix}

The main purpose of this section is to show that the norm of the Hausdorff operator $\H_\varphi$ in Theorem \ref{main theorem} ($\int_0^\infty \varphi(t)dt$) still holds even when one replaces $\|f\|_{H^1(\R)}= \|M_\Phi f\|_{L^1(\R)}$ by some other equivalent norms on $H^1(\R)$. Such norms can be defined via the nontangential maximal functions, atoms, the Hilbert transform,... See Stein's book \cite{St}.

Let $\psi$ be a function in the Schwartz space $\mathcal S(\R)$ satisfying $\int_{\R}\psi(x)dx \ne 0$; or be the Poisson kernel $P$ on $\R$. Then, for $f\in L^1(\R)$,  we define the {\it nontangential maximal function} $\mathcal M_\psi f$ of $f$ by
$$\mathcal M_\psi f(x)=\sup_{|x-y|<t} |\psi_t*f(y)|,\quad x\in\R.$$

A function $a$ is called an $H^1$-atom related to the interval $B$ if
\begin{enumerate}[$\bullet$]
	\item supp $a\subset B$;
	\item $\|a\|_{L^\infty(\R)}\leq |B|^{-1}$;
	\item $\int_{\R} a(x)dx=0$.
\end{enumerate} 
We define the Hardy space $H^1_{\rm at}(\R)$ as the space of functions $f\in L^1(\R)$ which can be written as $f=\sum_{j=1}^\infty \lambda_j a_j$ with $a_j$'s are $H^1$-atoms and $\lambda_j$'s are complex numbers satisfying $\sum_{j=1}^\infty |\lambda_j|<\infty$. The norm on $H^1_{\rm at}(\R)$ is then defined by
$$\|f\|_{H^1_{\rm at}(\R)}:= \inf\left\{\sum_{j=1}^\infty |\lambda_j|: f= \sum_{j=1}^\infty \lambda_j a_j\right\}.$$ 


The following is classical and can be found in Stein's book \cite{St}.

\begin{theorem}\label{some equivalent characterizations of H1}
	Let $f\in L^1(\R)$. Then the following conditions are equivalent:
	\begin{enumerate}[\rm (i)]
		\item $f\in H^1(\R)$.
		\item $\mathcal M_\psi f\in L^1(\R)$.
		\item $f\in H^1_{\rm at}(\R)$.
		\item $H(f)\in L^1(\R)$.
	\end{enumerate}
	Moreover, in that case,
	$$\|f\|_{H^1(\R)} \sim \|\mathcal M_\psi f\|_{L^1(\R)}\sim \|f\|_{H^1_{\rm at}(\R)}\sim \|f\|_{L^1(\R)} + \|H(f)\|_{L^1(\R)}.$$
\end{theorem}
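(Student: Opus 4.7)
The plan is to establish $\text{(i)} \Leftrightarrow \text{(ii)} \Leftrightarrow \text{(iii)} \Leftrightarrow \text{(iv)}$ together with the corresponding norm equivalences. Since Lemma \ref{some characterizations of real Hardy spaces} already supplies $\text{(i)} \Leftrightarrow \text{(iv)}$ and the intermediate characterization $\|f\|_{H^1(\R)} \sim \|M_P f\|_{L^1(\R)}$, the genuinely new content is the passage from the smooth radial maximal function to the nontangential one, and the atomic decomposition. Throughout I would keep the Poisson kernel as a convenient pivot, since its harmonic extension theory (Lemma \ref{boundary value function}) permits pointwise comparisons via subharmonicity.

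For $\text{(i)} \Leftrightarrow \text{(ii)}$, taking $y = x$ in the definition of $\mathcal M_\psi f$ immediately dominates the radial maximal function, which for a Schwartz kernel $\psi$ of nonzero mean is comparable to $M_\Phi f$ by the standard Fefferman--Stein equivalence of radial maximal functions. For the reverse pointwise bound $\mathcal M_\psi f(x) \lesssim M_\Phi f(x)$ I would invoke the ``grand'' maximal function machinery: expand $\psi_t(y-\cdot)$ for $|x-y|<t$ as a combination of dilates and translates of a fixed reference Schwartz bump and use a Peetre-type estimate to absorb the offset $|x-y|<t$ back into a radial supremum. In the special case $\psi=P$, harmonicity of $P_t\ast f$ gives this bound directly.

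For $\text{(iii)} \Rightarrow \text{(i)}$, sublinearity of $M_\Phi$ reduces the problem to the uniform bound $\|M_\Phi a\|_{L^1(\R)} \lesssim 1$ for each $H^1$-atom $a$ supported on an interval $B$. I would split the integral into $\int_{2B}$ and $\int_{\R \setminus 2B}$: the local part is controlled by Cauchy--Schwarz plus $L^2$-boundedness of $M_\Phi$ together with $\|a\|_{L^\infty}\le |B|^{-1}$; the far part uses the vanishing mean of $a$, replacing $\Phi_t(x-y)$ by $\Phi_t(x-y)-\Phi_t(x-y_B)$ (where $y_B$ is the center of $B$) and exploiting Schwartz decay of $\nabla \Phi_t$ to produce an integrable tail.

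For the converse $\text{(i)} \Rightarrow \text{(iii)}$ I would run the classical Calder\'on--Zygmund/Whitney construction: decompose each level set $\Omega_k=\{M_\Phi f>2^k\}$ into Whitney intervals $B_{k,j}$, define pre-atoms from $f\chi_{B_{k,j}}$ with mean-value corrections to enforce the cancellation property, and verify that the resulting atomic series reproduces $f$ in $L^1$ with $\sum_{k,j}|\lambda_{k,j}|\lesssim \sum_k 2^k |\Omega_k|\sim \|M_\Phi f\|_{L^1}$. This Coifman--Latter step is the main obstacle: the combinatorial bookkeeping needed to distribute the mean-zero correction across overlapping Whitney intervals without spoiling the uniform $L^\infty$-normalization of each atom is delicate, though by now entirely standard and fully carried out in Stein's book.
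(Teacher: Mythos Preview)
The paper does not supply a proof of this theorem at all: it states the result as classical and refers the reader to Stein's book \cite{St}. There is therefore no ``paper's own proof'' against which to compare your attempt. Your sketch follows the standard Fefferman--Stein route (radial versus nontangential maximal functions via the grand maximal function, uniform $H^1$-bounds for atoms, and the Coifman--Latter atomic decomposition via Whitney cubes on level sets), which is precisely the argument carried out in the cited reference, so in that sense your approach is the expected one and is correct in outline.
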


The main aim of this section is to establish the following.

\begin{theorem}\label{the last theorem}
	Let $\varphi$ be a nonnegative function in $L^1(0,\infty)$. Then 
	$$\|\H_\varphi\|_{(H^1(\R),\|\cdot\|_*)\to (H^1(\R),\|\cdot\|_*)}= \int_0^\infty \varphi(t)dt,$$
	where $\|\cdot\|_*$ is one of the four norms in Theorem \ref{some equivalent characterizations of H1}.
\end{theorem}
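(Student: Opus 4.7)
The plan is to rerun the proof of Theorem~\ref{main theorem} with $\|\cdot\|_{H^1(\R)}$ replaced by the given alternative norm $\|\cdot\|_*$. Once the sharp upper bound $\|\H_\varphi f\|_* \leq \big(\int_0^\infty \varphi(t)dt\big)\|f\|_*$ is in hand, the matching lower bound follows verbatim from the test-function and dilation argument of Lemma~\ref{key lemma}(ii) and Theorem~\ref{main theorem}. Indeed, the dilation formula $\|f(\cdot/m)\|_* = m\|f\|_*$ holds for all four norms (by direct verification, using $\psi_r*f(\cdot/m)(y)=\psi_{r/m}*f(y/m)$ for $\mathcal{M}_\psi$, $H(f(\cdot/m))(x)=Hf(x/m)$ for the Hilbert transform, and rescaling atoms from $B$ to $mB$ for the atomic norm), while the crucial vanishing
$$\frac{\big\|\H_{\varphi_\delta}f_\varepsilon - f_\varepsilon\int_0^\infty\varphi_\delta(t)dt\big\|_*}{\|f_\varepsilon\|_*} \to 0 \qquad \text{as } \varepsilon\to 0$$
is inherited from the corresponding $\|\cdot\|_{H^1(\R)}$-vanishing already carried out in Lemma~\ref{key lemma} via Theorem~\ref{some equivalent characterizations of H1}. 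The essential work therefore lies entirely in the upper bound, and the challenge is to obtain the \emph{exact} constant $\int_0^\infty\varphi(t)dt$, not merely a multiple of it (which would be trivial from norm equivalence alone).

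For the nontangential maximal function $\mathcal{M}_\psi$, Fubini gives $\psi_r*\H_\varphi f(y)=\int_0^\infty (\psi_{r/t}*f)(y/t)\varphi(t)/t\,dt$; and the condition $|y-x|<r$ implies $|y/t-x/t|<r/t$ for every $t>0$, so $|(\psi_{r/t}*f)(y/t)|\leq \mathcal{M}_\psi f(x/t)$. This produces the pointwise inequality $\mathcal{M}_\psi(\H_\varphi f)(x)\leq \H_\varphi(\mathcal{M}_\psi f)(x)$, and integrating in $x$ (and using Fubini again) gives the desired $L^1$-bound. For the norm $\|f\|_{L^1(\R)}+\|H(f)\|_{L^1(\R)}$, the $L^1$-part is immediate by Fubini, while the dilation identity $H(f(\cdot/t))(x)=Hf(x/t)$ yields $H(\H_\varphi f)=\H_\varphi(Hf)$, handling the Hilbert-transform part.

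The remaining and most delicate case is the atomic norm $\|\cdot\|_{H^1_{\rm at}(\R)}$. For any $H^1$-atom $a$ supported in an interval $B$, the dilate $a_t(x):=(1/t)a(x/t)$ is itself an $H^1$-atom supported in $tB$, so that $\|a_t\|_{H^1_{\rm at}(\R)}\leq 1$, and pointwise one has
$$\H_\varphi a \;=\; \int_0^\infty a_t\,\varphi(t)\,dt.$$
Interpreting the right-hand side as a Bochner integral of an $H^1_{\rm at}(\R)$-valued function, which is well defined because the integrand is uniformly norm-bounded and $\int_0^\infty\varphi(t)dt<\infty$, the Bochner norm inequality yields $\|\H_\varphi a\|_{H^1_{\rm at}(\R)}\leq \int_0^\infty\varphi(t)dt$; summing over an atomic decomposition of general $f$ and taking the infimum then delivers the claim. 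The main obstacle is the verification of strong measurability of $t\mapsto a_t$ into $H^1_{\rm at}(\R)$; this can be handled either directly by establishing continuity on a dense class (atoms with smooth cutoff) and extending by density, or indirectly by approximating $\varphi$ in $L^1(0,\infty)$ by compactly supported simple functions and then invoking lower semicontinuity of $\|\cdot\|_{H^1_{\rm at}(\R)}$ with respect to $L^1(\R)$-convergence, a consequence of $H^1$--BMO duality.
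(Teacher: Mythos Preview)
Your proposal is correct. For the lower bound and for the upper bounds in the nontangential--maximal and Hilbert--transform cases, your argument is essentially the paper's own. The genuine difference is the atomic case. The paper bypasses any direct estimate on atoms by exploiting the fact that the atomic norm is exactly the predual norm of $(BMO(\R),\|\cdot\|_{BMO})$: this gives $\|\H_\varphi\|_{H^1_{\rm at}\to H^1_{\rm at}}=\|\H^*_\varphi\|_{BMO\to BMO}$, where $\H^*_\varphi g(x)=\int_0^\infty g(tx)\varphi(t)\,dt$, and then quotes Andersen's inequality $\|\H^*_\varphi g\|_{BMO}\le\big(\int_0^\infty\varphi\big)\|g\|_{BMO}$ to finish in one line. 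Your Bochner--integral route---writing $\H_\varphi a=\int_0^\infty a_t\,\varphi(t)\,dt$ with each $a_t$ an atom and passing the norm inside---is more self-contained (no external $BMO$ estimate needed) and makes the constant transparent, at the cost of the strong-measurability verification you flag. That verification is not hard here since $H^1(\R)$ is separable and $t\mapsto a_t$ is continuous into $L^1(\R)$, hence weakly measurable against $BMO$; but the duality shortcut is certainly cleaner.
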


\begin{proof}
	By the proofs of Lemma \ref{key lemma} and Theorem \ref{main theorem}, we see that 
	$$\|\H_\varphi\|_{(H^1(\R),\|\cdot\|_*)\to (H^1(\R),\|\cdot\|_*)}\geq \int_0^\infty \varphi(t)dt$$
	for any norm of the four norms in Theorem \ref{some equivalent characterizations of H1}. So, it suffices to show
	\begin{equation}\label{the last theorem, 1}
		\|\H_\varphi\|_{(H^1(\R),\|\cdot\|_*)\to (H^1(\R),\|\cdot\|_*)}\leq \int_0^\infty \varphi(t)dt.
	\end{equation}
		
	{\bf Case 1:} $\|f\|_*= \|f\|_{L^1(\R)}+ \|H(f)\|_{L^1(\R)}$. For any $f\in H^1(\R)$, we have
	$$\|\H_\varphi f\|_{L^1(\R)}\leq \int_0^\infty \varphi(t)dt \|f\|_{L^1(\R)}$$
	and
	$$\|H(\H_\varphi f)\|_{L^1(\R)}= \|\H_\varphi (H(f))\|_{L^1(\R)}\leq \int_0^\infty \varphi(t)dt \|H(f)\|_{L^1(\R)}$$	
	by \cite[Theorems 1 and 3]{LM1}. This implies that (\ref{the last theorem, 1}) holds.
	
	\vskip 0.3cm
	
	{\bf Case 2:} $\|f\|_*=\|f\|_{H^1_{\rm at}(\R)}$. Denote by $BMO(\R)$ the John-Nirenberg space (see \cite{St}) with the norm
	$$\|g\|_{BMO}:= \sup_{B}\frac{1}{|B|}\int_B \Big| g(x) -\frac{1}{|B|}\int_B g(y)dy\Big|<\infty,$$
	where the supremum is taken over all intervals $B\subset \R$. It is well-known that $BMO(\R)$ is the dual space of $H^1(\R)$, moreover,
	$$\|g\|_{BMO}= \sup_{\|f\|_*\leq 1} \left|\int_{\R} f(x)g(x)dx\right|,$$
	where the supremum is taken over all functions $f\in H^1(\R)$ with $\|f\|_*\leq 1$. Therefore, by \cite[Theorem 3]{An} and a standard functional analysis
	argument, 
	$$\|\H_\varphi\|_{(H^1(\R),\|\cdot\|_*)\to (H^1(\R),\|\cdot\|_*)}= \|\H^*_\varphi\|_{BMO\to BMO}\leq \int_0^\infty \varphi(t)dt,$$ 
	where $\H^*_\varphi$ is the conjugated operator of $\H^*_\varphi$ defined on $BMO(\R)$ by
	$$\H^*_\varphi g(x):= \int_0^\infty g(tx)\varphi(t)dt, \quad x\in \R.$$
	
	{\bf Case 3:} $\|f\|_*= \|\mathcal M_\psi f\|_{L^1(\R)}$. For any $f\in H^1(\R)$, the Fubini theorem gives
	\begin{eqnarray*}
		\mathcal M_\psi (\H_\varphi f)(x) &=& \sup_{|y-x|<r}\left|\int_{\R} dz \int_0^\infty \frac{1}{r}\psi\left(\frac{y-z}{r}\right) f\left(\frac{z}{t}\right) \frac{\varphi(t)}{t} dt\right|\\
		&=& \sup_{|y-x|<r}\left| \int_0^\infty \psi_{r/t}*f\left(\frac{y}{t}\right)\frac{\varphi(t)}{t}dt \right|\\
		&\leq& \H_\varphi(\mathcal M_\psi f)(x)
	\end{eqnarray*}
	for all $x\in \R$. Hence,
	\begin{eqnarray*}
		\|\H_\varphi f\|_* &=& \|\mathcal M_\psi (\H_\varphi f)\|_{L^1(\R)}\\
		&\leq& \|\H_\varphi\|_{L^1(\R)\to L^1(\R)} \|\mathcal M_\psi f\|_{L^1(\R)}= \int_0^\infty \varphi(t)dt \|f\|_*,
	\end{eqnarray*}
	which implies that (\ref{the last theorem, 1}) holds, and thus ends the proof of Theorem \ref{the last theorem}.
	
	\end{proof}

Finally, we give a new proof for a known result (see \cite[Theorem 1.2]{FL}).

\begin{theorem}
	Let $\varphi$ be a nonnegative function in $L^1_{\rm loc}(0,\infty)$ satisfying that $\H_\varphi$ is bounded on $H^1(\R)$. Then $\varphi\in L^1(0,\infty)$.
\end{theorem}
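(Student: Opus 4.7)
The plan is to test the boundedness of $\H_\varphi$ against a single carefully chosen $H^1$-atom $a$ and to compute $\|\H_\varphi a\|_{L^1(\R)}$ explicitly; this norm turns out to equal $\int_0^\infty \varphi(t)\,dt$, so the hypothesis $\H_\varphi a \in H^1(\R) \subset L^1(\R)$ immediately forces that integral to be finite. No truncation, duality, or limiting argument is needed.

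Concretely, take
\begin{equation*}
a(x) := \frac{1}{2}\chi_{(0,1)}(x) - \frac{1}{2}\chi_{(-1,0)}(x).
\end{equation*}
Then $a$ is supported in $[-1,1]$, satisfies $\|a\|_{L^\infty(\R)} = 1/2 = |[-1,1]|^{-1}$, and has $\int_\R a(x)\,dx = 0$, so $a$ is an $H^1$-atom; by Theorem \ref{some equivalent characterizations of H1} we have $a \in H^1(\R)$ with $\|a\|_{H^1(\R)}$ bounded by an absolute constant. For $x > 0$ and $t > 0$, the ratio $x/t$ is positive, so the negative piece of $a$ contributes nothing and $a(x/t) = \tfrac{1}{2}\chi_{(x,\infty)}(t)$; hence
\begin{equation*}
\H_\varphi a(x) = \frac{1}{2}\int_x^\infty \frac{\varphi(t)}{t}\,dt \qquad (x > 0),
\end{equation*}
and symmetrically $\H_\varphi a(x) = -\tfrac{1}{2}\int_{|x|}^\infty \varphi(t)/t\,dt$ for $x < 0$. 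Combining the two halves and applying Tonelli's theorem (legitimate because $\varphi \geq 0$) then gives
\begin{equation*}
\|\H_\varphi a\|_{L^1(\R)} = \int_0^\infty \int_x^\infty \frac{\varphi(t)}{t}\,dt\,dx = \int_0^\infty \frac{\varphi(t)}{t}\cdot t\,dt = \int_0^\infty \varphi(t)\,dt.
\end{equation*}

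Since $\H_\varphi$ is bounded on $H^1(\R)$ and $a \in H^1(\R)$, we obtain $\H_\varphi a \in H^1(\R) \subset L^1(\R)$, so the left-hand side is finite; therefore $\int_0^\infty \varphi(t)\,dt < \infty$, that is, $\varphi \in L^1(0,\infty)$. The only mildly subtle point is the identification of the pointwise integral formula for $\H_\varphi a$ with the action on $a$ of the bounded operator $\H_\varphi$; but this is harmless, for if the pointwise integral diverged on a set of positive measure then the resulting function would fail to lie in $L^1(\R)$, contradicting its membership in $H^1(\R)$.
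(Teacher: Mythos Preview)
Your proof is correct and follows essentially the same approach as the paper: test the operator against a single odd function in $H^1(\R)$ that is nonnegative on $(0,\infty)$, then use Tonelli to show that $\|\H_\varphi f\|_{L^1(\R)}$ dominates a constant multiple of $\int_0^\infty \varphi(t)\,dt$. The only difference is the choice of test function---you use the atom $a=\tfrac12\chi_{(0,1)}-\tfrac12\chi_{(-1,0)}$, while the paper uses $f(x)=x/(x^2+1)^2$ and checks $f\in H^1(\R)$ via the explicit Hilbert transform $H(f)(x)=(x^2-1)/(2(x^2+1)^2)$---and your choice makes both the $H^1$ membership and the Tonelli computation slightly cleaner.
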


\begin{proof}
	
	By Lemma \ref{some characterizations of real Hardy spaces}, the following function
	$$f(x)=\frac{x}{(x^2+1)^2}, \quad x\in\R,$$
	is in $H^1(\R)$ since $f(x)\in L^1(\R)$ and $H(f)(x)= \frac{x^2-1}{2(x^2+1)^2}\in L^1(\R)$. Hence,
	$$\H_\varphi f(x)=\int_0^\infty \frac{\frac{x}{t}}{\left[(\frac{x}{t})^2+1\right]^2} \frac{\varphi(t)}{t} dt$$
	is in $H^1(\R)$ since $\H_\varphi$ is bounded on $H^1(\R)$. As a consequence,
	\begin{eqnarray*}
		\int_0^\infty \frac{y}{(y^2+1)^2}dy \int_0^\infty \varphi(t)dt &=& \int_0^\infty dx \int_0^\infty \frac{\frac{x}{t}}{\left[(\frac{x}{t})^2+1\right]^2} \frac{\varphi(t)}{t} dt\\
		&\leq& \|\H_\varphi f\|_{L^1(\R)}\lesssim \|\H_\varphi f\|_{H^1(\R)}<\infty,
	\end{eqnarray*} 
this implies that $\varphi\in L^1(0,\infty)$.
	
\end{proof}

{\bf Acknowledgements.} The paper was completed when the authors was visiting
to Vietnam Institute for Advanced Study in Mathematics (VIASM). We would like
to thank the VIASM for financial support and hospitality.

\end{document}